\begin{document}
\newtheorem{theorem}{Theorem}[section]
\newtheorem{lemma}[theorem]{Lemma}
\newtheorem{definition}[theorem]{Definition}
\newtheorem{conjecture}[theorem]{Conjecture}
\newtheorem{proposition}[theorem]{Proposition}
\newtheorem{algorithm}[theorem]{Algorithm}
\newtheorem{corollary}[theorem]{Corollary}
\newtheorem{observation}[theorem]{Observation}
\newtheorem{problem}[theorem]{Open Problem}
\newcommand{\noin}{\noindent}
\newcommand{\ind}{\indent}
\newcommand{\al}{\alpha}
\newcommand{\om}{\omega}
\newcommand{\pp}{\mathcal P}
\newcommand{\ppp}{\mathfrak P}
\newcommand{\R}{{\mathbb R}}
\newcommand{\N}{{\mathbb N}}
\newcommand\eps{\varepsilon}
\newcommand{\E}{\mathbb E}
\newcommand{\Prob}{\mathbb{P}}
\newcommand{\pl}{\textrm{C}}
\newcommand{\dang}{\textrm{dang}}
\renewcommand{\labelenumi}{(\roman{enumi})}
\newcommand{\bc}{\bar c}
\newcommand{\G}{{\mathcal{G}}}
\newcommand{\expect}[1]{\E \left [ #1 \right ]}
\newcommand{\ceil}[1]{\left \lceil #1 \right \rceil}
\newcommand{\of}[1]{\left( #1 \right)}
\newcommand{\set}[1]{\left\{ #1 \right\}}
\newcommand{\size}[1]{\left \vert #1 \right \vert}
\newcommand{\floor}[1]{\left \lfloor #1 \right \rfloor}

\title{Containment game played on random graphs: another zig-zag theorem}

\author{Pawe{\l} Pra{\l}at}
\address{Department of Mathematics, Ryerson University, Toronto, ON, Canada, M5B 2K3}
\email{\tt pralat@ryerson.ca}

\thanks{Supported by grants from NSERC and Ryerson}
\keywords{Containment, Cops and Robbers, vertex-pursuit games, random graphs}
\subjclass{05C57, 05C80}

\maketitle

\begin{abstract}
We consider a variant of the game of Cops and Robbers, called Containment, in which cops move from edge to adjacent edge, the robber moves from vertex to adjacent vertex (but cannot move along an edge occupied by a cop). The cops win by ``containing'' the robber, that is, by occupying all edges incident with a vertex occupied by the robber. The minimum number of cops, $\xi(G)$, required to contain a robber played on a graph $G$ is called the containability number, a natural counterpart of the well-known cop number $c(G)$. This variant of the game was recently introduced by Komarov and Mackey, who proved that for every graph $G$, $c(G) \le \xi(G) \le \gamma(G) \Delta(G)$, where $\gamma(G)$ and $\Delta(G)$ are the domination number and the maximum degree of $G$, respectively. They conjecture that an upper bound can be improved and, in fact, $\xi(G) \le c(G) \Delta(G)$. (Observe that, trivially, $c(G) \le \gamma(G)$.) This seems to be the main question for this game at the moment. By investigating expansion properties, we provide asymptotically almost sure bounds on the containability number of binomial random graphs $\G(n,p)$ for a wide range of $p=p(n)$, showing that it forms an intriguing zigzag shape. This result also proves that the conjecture holds for some range of $p$ (or holds up to a constant or an $O(\log n)$ multiplicative factors for some other ranges). 
\end{abstract}

\section{Introduction}

The game of Cops and Robbers (defined, along with all the standard notation, later in this section) is usually studied in the context of the {\em cop number}, the minimum number of cops needed to ensure a winning strategy. The cop number is often challenging to analyze; establishing upper bounds for this parameter is the focus of Meyniel's conjecture that the cop number of a connected $n$-vertex graph is $O(\sqrt{n}).$ For additional background on Cops and Robbers and Meyniel's conjecture, see the book~\cite{bonato}.

A number of variants of Cops and Robbers have been studied. For example, we may allow a cop to capture the robber from a distance $k$, where $k$ is a non-negative integer~\cite{bonato4}, play on edges~\cite{pawel}, allow one or both players to move with different speeds~\cite{NogaAbbas, fkl} or to teleport, allow the robber to capture the cops~\cite{bonato0}, make the robber invisible or drunk~\cite{drunk1,drunk2}, or allow at most one cop to move in any given round~\cite{oo, hypercube, lazy_gnp}. See Chapter~8 of~\cite{bonato} for a non-comprehensive survey of variants of Cops and Robbers.

\bigskip

In this paper, we consider a variant of the game of Cops and Robbers, called \emph{Containment}, introduced recently by Komarov and Mackey~\cite{komarov}. In this version, cops move from edge to adjacent edge, the robber moves as in the classic game, from vertex to adjacent vertex (but cannot move along an edge occupied by a cop). Formally, the game is played on a finite, simple, and undetected graph. There are two players, a set of \emph{cops} and a single \emph{robber}. The game is played over a sequence of discrete time-steps or \emph{turns}, with the cops going first on turn $0$ and then playing on alternate time-steps.  A \emph{round} of the game is a cop move together with the subsequent robber move.  The cops occupy edges and the robber occupies vertices; for simplicity, we often identify the player with the vertex/edge they occupy. When the robber is ready to move in a round, she can move to a neighbouring vertex but cannot move along an edge occupied by a cop, cops can move to an edge that is incident to their current location. Players can always \emph{pass}, that is, remain on their own vertices/edges. Observe that any subset of cops may move in a given round. The cops win if after some finite number of rounds, all edges incident with the robber are occupied by cops. This is called a \emph{capture}. The robber wins if she can evade capture indefinitely. A \emph{winning strategy for the cops} is a set of rules that if followed, result in a win for the cops. A \emph{winning strategy for the robber} is defined analogously.  As stated earlier, the original game of \emph{Cops and Robbers} is defined almost exactly as this one, with the exception that all players occupy vertices.

If we place a cop at each edge, then the cops are guaranteed to win. Therefore, the minimum number of cops required to win in a graph $G$ is a well-defined positive integer, named the \emph{containability number} of the graph $G.$ Following the notation introduced in~\cite{komarov}, we write $\xi(G)$ for the containability number of a graph $G$ and $c(G)$ for the original \emph{cop-number} of $G$.

\bigskip

In~\cite{komarov}, Komarov and Mackey proved that for every graph $G$, 
$$
c(G) \le \xi(G) \le \gamma(G) \Delta(G), 
$$
where $\gamma(G)$ and $\Delta(G)$ are the domination number and the maximum degree of $G$, respectively. It was conjectured that the upper bound can be strengthened and, in fact, the following holds.

\begin{conjecture}[\cite{komarov}]\label{con:komarov}
For every graph $G$, $\xi(G) \le c(G) \Delta(G)$. 
\end{conjecture}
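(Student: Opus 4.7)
The natural plan is a shadow strategy: given a winning strategy in the classic game using $k = c(G)$ cops $C_1, \ldots, C_k$, assign to each $C_i$ a team of $\Delta = \Delta(G)$ edge-cops and try to maintain the invariant that whenever $C_i$ sits at a vertex $v$, its team occupies every edge incident to $v$ (with the $\Delta - \deg(v)$ surplus cops parked wherever convenient). If this invariant can be preserved round after round, then at the moment the simulated classic strategy places some $C_i$ on the robber's vertex $r$, the corresponding team contains the robber, winning the containment game with $c(G)\Delta(G)$ cops in total.

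The crux is a single transition. Suppose $C_i$ moves from $v$ to $u$ along edge $vu$; the team must pass from ``all edges at $v$'' to ``all edges at $u$'' in one move. For each $w \in N(u)$, some team member must arrive on the edge $uw$. Because an edge-cop moves to an adjacent edge (sharing an endpoint), the only team edges that can feed $uw$ are $vu$ (sharing $u$) and, if $vw \in E(G)$, the edge $vw$ (sharing $w$). Hence every ``new'' neighbour $w \in N(u) \setminus (\{v\} \cup N(v))$ must be supplied by a cop travelling through $vu$. If there are several such neighbours, several team members must leave $vu$ for distinct destinations in the same round, which forces us to pre-stack surplus on $vu$ in advance. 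The surplus per team is only $\Delta - \deg(v)$, and pre-stacking demands foresight about which edge $C_i$ will cross next --- foresight that perturbs the invariant in the previous round and cascades backwards through the entire cop history.

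The main obstacle is therefore that the one-step reach of an edge-cop in the line graph is strictly weaker than that of a vertex-cop, so naive shadowing fails whenever the classic cop is pushed into ``genuinely new'' territory. A possible remedy is to look ahead in the classic strategy and pre-position surplus cops along the upcoming path, pushing the obligation backwards through time; another is to exploit the fact that in containment the robber's own moves are blocked by occupied edges, which may slow her enough to let the edge-cops catch up over several rounds while the classic shadow pauses. The hard part --- and presumably the reason Conjecture~\ref{con:komarov} is still open --- is to show that one of these mechanisms works for \emph{every} graph; this may also explain why the present paper confirms the conjecture only for appropriate ranges of $p$ in $\G(n,p)$ rather than in full generality.
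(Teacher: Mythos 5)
There is nothing to compare here: the statement you were given is a \emph{conjecture} (due to Komarov and Mackey), and the paper contains no proof of it. The paper's contribution is only to verify the conjecture asymptotically almost surely for $\G(n,p)$ in certain ranges of $p$ --- exactly, up to a constant factor, or up to an $O(\log n)$ factor, depending on the range --- as a corollary of the matching upper and lower bounds on $\xi(\G(n,p))$ in Theorem~\ref{thm:main}. You correctly recognized that no general proof is available and, appropriately, did not manufacture one.

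Your diagnosis of why the naive shadow strategy fails is accurate and worth keeping in mind: if a team of $\Delta$ edge-cops occupies all edges at $v$ and the simulated vertex-cop steps to $u$, then for any $w \in N(u) \setminus (N(v) \cup \{v\})$ the only team edge adjacent to $uw$ is $vu$ itself, so covering several such $w$ in one round requires stacking several cops on $vu$ in advance, and the available surplus $\Delta - \deg(v)$ can be zero. This is precisely the obstruction that makes the conjecture nontrivial; note that the weaker bound $\xi(G) \le \gamma(G)\Delta(G)$ from~\cite{komarov} sidesteps it by using a static dominating-set placement rather than a dynamic simulation. If you wanted to contribute in the spirit of the paper, the actionable route is not a general shadowing argument but the probabilistic one actually used here: prove upper bounds on $\xi(\G(n,p))$ directly via expansion and Hall's theorem, and lower bounds via a robber escape strategy, then compare with the known values of $c(\G(n,p)) \cdot \Delta(\G(n,p))$.
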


\noindent Observe that, trivially, $c(G) \le \gamma(G)$ so this would imply the previous result. This seems to be the main question for this variant of the game at the moment. By investigating expansion properties, we provide asymptotically almost sure bounds on the containability number of binomial random graphs $\G(n,p)$ for a wide range of $p=p(n)$, proving that the conjecture holds for some ranges of $p$ (or holds up to a constant or an $O(\log n)$ multiplicative factors for some other ranges of $p$). However, before we state the result, let us introduce the probability space we deal with and mention a few results for the classic cop-number that will be needed to examine the conjecture (since the corresponding upper bound is a function of the cop number).

\bigskip

The \emph{random graph} $\G(n,p)$ consists of the probability space $(\Omega, \mathcal{F}, \Prob)$, where $\Omega$ is the set of all graphs with vertex set $\{1,2,\dots,n\}$, $\mathcal{F}$ is the family of all subsets of $\Omega$, and for every $G \in \Omega$,
$$
\Prob(G) = p^{|E(G)|} (1-p)^{{n \choose 2} - |E(G)|} \,.
$$
This space may be viewed as the set of outcomes of ${n \choose 2}$ independent coin flips, one for each pair $(u,v)$ of vertices, where the probability of success (that is, adding edge $uv$) is $p.$ Note that $p=p(n)$ may (and usually does) tend to zero as $n$ tends to infinity. All asymptotics throughout are as $n \rightarrow \infty $ (we emphasize that the notations $o(\cdot)$ and $O(\cdot)$ refer to functions of $n$, not necessarily positive, whose growth is bounded). We say that an event in a probability space holds \emph{asymptotically almost surely} (or \emph{a.a.s.}) if the probability that it holds tends to $1$ as $n$ goes to infinity.

\bigskip

Let us now briefly describe some known results on the (classic) cop-number of $\G(n,p)$. Bonato, Wang, and the author of this paper investigated such games in $\G(n,p)$ random graphs and in generalizations used to model complex networks with power-law degree distributions (see~\cite{bpw}). From their results it follows that if $2 \log n / \sqrt{n} \le p < 1-\eps$ for some $\eps>0$, then a.a.s. we have that
\begin{equation*}
c(\G(n,p))= \Theta(\log n/p),
\end{equation*}
so Meyniel's conjecture holds a.a.s.\ for such $p$. In fact, for $p=n^{-o(1)}$ we have that a.a.s.\  $c(\G(n,p))=(1+o(1)) \log_{1/(1-p)} n$. A simple argument using dominating sets shows that Meyniel's conjecture also holds a.a.s.\  if $p$ tends to 1 as $n$ goes to infinity (see~\cite{p} for this and stronger results). Bollob\'as, Kun and Leader~\cite{bkl} showed that if $p(n) \ge 2.1 \log n /n$, then a.a.s.
$$
\frac{1}{(pn)^2}n^{ 1/2 - 9/(2\log\log (pn))  }  \le c(\G(n,p))\le 160000\sqrt n \log n\,.
$$
From these results, if $np \ge 2.1 \log n$ and either $np=n^{o(1)}$ or $np=n^{1/2+o(1)}$, then a.a.s.\ $c(\G(n,p))= n^{1/2+o(1)}$. Somewhat surprisingly, between these values it was shown by \L{}uczak and the author of this paper~\cite{lp2} that the cop number has more complicated behaviour. It follows that a.a.s.\ $\log_n  c(\G(n,n^{x-1}))$ is asymptotic to the function $f(x)$ shown in Figure~\ref{fig1} (denoted in blue).
\begin{figure}[h]
\begin{center}
\includegraphics[width=3.3in]{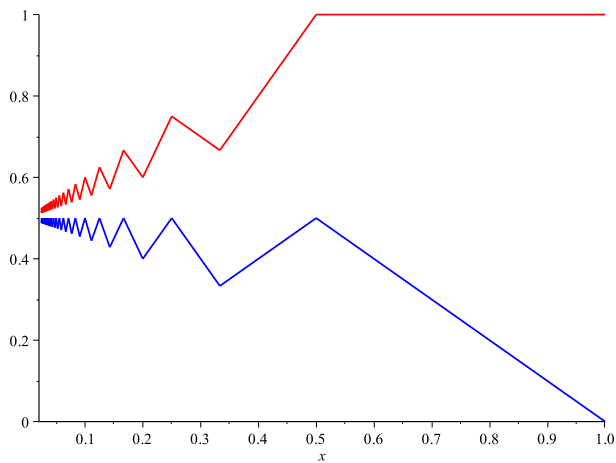}
\end{center}
\caption{The ``zigzag'' functions representing the ordinary cop number (blue) and the containability number (red).}\label{fig1}
\end{figure}

Formally, the following result holds for the classic game.

\begin{theorem}[\cite{lp2, bpw}]\label{thm:zz}
Let $0<\alpha<1$ and $d=d(n)=np=n^{\alpha+o(1)}$.
\begin{enumerate}
\item If $\frac{1}{2j+1}<\alpha<\frac{1}{2j}$ for some integer $j\ge 1$, then a.a.s.\
$$
c(\G(n,p))= \Theta(d^j)\,.
$$
\item If $\frac{1}{2j}<\alpha<\frac{1}{2j-1}$ for some integer $j\ge 2$, then a.a.s.\
\begin{eqnarray*}
c(\G(n,p)) &=& \Omega \left( \frac{n}{d^j} \right), \text{ and } \\
c(\G(n,p)) &=& O \left( \frac{n \log n}{d^j}  \right)\,.
\end{eqnarray*}
\item If $1/2 < \alpha < 1$, then a.a.s.\
$$
c(\G(n,p)) = \Theta \left( \frac {n \log n}{d} \right).
$$
\end{enumerate}
\end{theorem}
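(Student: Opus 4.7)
The proof naturally splits along three regimes, and my plan is to exploit two structural features of $\G(n,p)$: a.a.s.\ strong expansion, namely $|B_r(v)| = (1+o(1))d^r$ while $d^r \ll n$, and local tree-likeness, namely the near-absence of short cycles within radius $j$ of any vertex whenever $d^{2j}/n \to 0$. The zigzag arises from the interplay of two quantities: a ``local'' cop count of order $d^j$, obtained by chasing the robber through balls of radius $j$, and a ``global'' count of order $n/d^j$, obtained via distance-$(j-1)$ domination. These two quantities match at $\alpha = 1/(2j)$, where both equal $n^{1/2+o(1)}$, producing the alternating peaks in Figure~\ref{fig1}.

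For part (i), where $1/(2j+1) < \alpha < 1/(2j)$, the upper bound $c = O(d^j)$ would be established via a trapping strategy: deploy $O(d^j)$ cops in a configuration such that every vertex lies within distance $j+1$ of sufficiently many of them, and, when the robber sits at $v$, have the nearby cops ambush her along geodesics inside $B_{j+1}(v)$. Local tree-likeness is essential here, as it ensures each geodesic is essentially unique, so the cops can predict the robber's possible moves. For the matching lower bound I would adapt an Aigner--Fromme-style retreat: with fewer than $\eps d^j$ cops in play, tree-likeness together with expansion allows the robber to always step to a neighbor whose depth-$(j-1)$ subtree contains no cop, and iterate the escape indefinitely.

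For part (ii), where $1/(2j) < \alpha < 1/(2j-1)$, the ball of radius $j$ now exceeds $n$ and the strategy of part (i) breaks down, so one falls back on global domination. The upper bound $O((n \log n)/d^j)$ follows by taking a random set $S$ of this size: a first-moment calculation shows $S$ is a.a.s.\ a distance-$(j-1)$ dominating set, and local tree-likeness on the smaller radius $j-1$ (which still holds, since $\alpha < 1/(2j-1)$) lets each cop execute a geodesic chase on its assigned territory. The lower bound $\Omega(n/d^j)$ is easier: with fewer cops the $(j-1)$-neighborhoods of the cop set miss many vertices, and the robber simply starts in and remains within the uncovered region. Part (iii) reduces to the classical random dominating set argument of size $O((n \log n)/d)$, matched by the observation that a single cop can dominate at most $O(d)$ vertices.

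The main obstacle, and where I would expect to spend the bulk of the effort, is the extra $\log n$ factor in the upper bound of part (ii). Removing it would require a strategy strictly more efficient than random domination, exploiting tree-likeness at a radius where short cycles still appear with non-negligible probability. This is the technical heart of \cite{lp2}, where careful branching-process estimates on $\G(n,p)$ are used to control the robber's options at each step, and it is the only place where I would expect the argument to deviate substantially from the short sketch above.
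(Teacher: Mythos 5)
First, note that Theorem~\ref{thm:zz} is quoted from \cite{lp2,bpw}; this paper does not reprove it, but Section~2 adapts exactly those arguments to the containment game, so that adaptation is the right benchmark. Measured against it, your sketch has the right global shape (a ``local'' bound of order $d^j$ versus a ``global'' bound of order $n/d^j$, crossing at $\alpha=1/(2j)$), but the mechanisms you propose for the two hard steps are not the ones that work. For the upper bounds, the actual strategy is not ``ambush along essentially unique geodesics'': one places the cops at random with a suitable density and, when the robber appears at $v$, seals an entire sphere $N_r(v)$ before she can cross it, by verifying Hall's condition for a matching between sphere vertices and nearby cops (expansion from Lemma~\ref{lem:elem1}(i) plus Chernoff and a union bound over subsets $K$). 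Nothing is predicted about the robber's moves, and tree-likeness plays no role in the upper bound. Your part~(ii) arithmetic is also off: a distance-$(j-1)$ dominating set has size $\Theta(n\log n/d^{j-1})$, not $n\log n/d^{j}$, and a single cop cannot clear a ball of radius $j-1$ on its own; the correct argument is again the random-placement/Hall scheme with a different density--radius trade-off. Likewise, in part~(iii) the observation that one cop dominates $O(d)$ vertices only yields $\Omega(n/d)$, which misses the $\log n$ factor needed for the stated $\Theta(n\log n/d)$.

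The most serious gap is the lower bound in part~(i). The single-level retreat ``step to a neighbour whose depth-$(j-1)$ subtree contains no cop'' fails for every $j\ge 2$: with $\eps d^{j}$ cops and only $(1+o(1))d$ neighbours, the cops can place $\eps d^{j-1}\gg 1$ cops into the radius-$(j-1)$ subtree of \emph{every} neighbour of $v$, so no clean neighbour need exist. What is actually required is the graded invariant of Lemma~\ref{lemma:lowerbound1}: a vertex is safe if the number of cops at distance $2i$ and $2i+1$ is at most $\left(d/(30c(2j+1))\right)^{i+1}$ for each $i\le j-1$ (together with the notion of a deadly neighbour), and one shows that for each scale $r$ fewer than $d/(2(2j+1))$ neighbours are $r$-dangerous. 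Moreover, ``local tree-likeness'' is not available at the scales this argument needs: the robber must control cops at distance up to $2j+1$ from her position, and $d^{2j+1}>n$ throughout part~(i), so the relevant balls are far from trees; the proof instead relies on the Kim--Vu-based path-counting bounds of Lemma~\ref{lem:elem2} (at most $O(1)$ paths of length at most $2j$ between two vertices, and $O(d^{2j+1}/n)$ or $O(\log n)$ at length $2j+1$). Finally, you have misplaced the technical heart: the $\log n$ gap in part~(ii) is \emph{not} removed in \cite{lp2} (the upper and lower bounds there genuinely differ by a $\log n$ factor), so no effort is needed on that front; the real difficulty lies in the graded escape argument and the path-count concentration just described.
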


The above result shows that Meyniel's conjecture holds a.a.s.\ for random graphs except perhaps when $np=n^{1/(2k)+o(1)}$ for some $k \in \N$, or when $np=n^{o(1)}$. The author of this paper and Wormald showed recently that the conjecture holds a.a.s.\ in $\G(n,p)$~\cite{PW_gnp} as well as in random $d$-regular graphs~\cite{PW_gnd}.

\bigskip

Finally, we are able to state the result of this paper.

\begin{theorem}\label{thm:main}
Let $0<\alpha<1$ and $d=d(n)=np=n^{\alpha+o(1)}$.
\begin{enumerate}
\item If $\frac{1}{2j+1}<\alpha<\frac{1}{2j}$ for some integer $j\ge 1$, then a.a.s.\
$$
\xi(\G(n,p))= \Theta(d^{j+1}) = \Theta(c(\G(n,p)) \cdot \Delta( \G(n,p) ) )\,.
$$
Hence, a.a.s.\ Conjecture~\ref{con:komarov} holds (up to a multiplicative constant factor).
\item If $\frac{1}{2j}<\alpha<\frac{1}{2j-1}$ for some integer $j\ge 2$, then a.a.s.\
\begin{eqnarray*}
\xi(\G(n,p)) &=& \Omega \left( \frac{n}{d^{j-1}} \right), \text{ and } \\
\xi(\G(n,p)) &=& O \left( \frac{n \log n}{d^{j-1}} \right) = O(c(\G(n,p)) \cdot \Delta( \G(n,p) ) \cdot \log n )\,.
\end{eqnarray*}
Hence, a.a.s.\ Conjecture~\ref{con:komarov} holds (up to a multiplicative $O(\log n)$ factor).
\item If $1/2 < \alpha < 1$, then a.a.s.\
$$
\xi(\G(n,p)) = \Theta(n) = \Theta (c(\G(n,p)) \cdot \Delta( \G(n,p) ) / \log n ) \le c(\G(n,p)) \cdot \Delta( \G(n,p).
$$
Hence, a.a.s.\ Conjecture~\ref{con:komarov} holds.
\end{enumerate}
\end{theorem}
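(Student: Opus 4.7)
The plan is to mirror, regime by regime, the proof strategy used for the classical cop number in Theorem~\ref{thm:zz}, paying an additional factor of $\Delta(\G(n,p))=(1+o(1))d$ every time a classical cop is simulated by a team of containment cops. This uniform factor of $\Delta\sim d$ is what accounts for the vertical gap between the blue and red curves in Figure~\ref{fig1}.

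For the upper bounds, I would rely on the inequality $\xi(G)\le |D|\cdot\Delta(G)$, where $D$ is a dominating set of an appropriate power $G^{k}$: stationing $\deg(v)$ cops on the edges incident to each $v\in D$ uses at most $|D|\cdot \Delta(G)$ cops, and a BFS-based shadowing rule shows that after $O(k)$ rounds the cops can reconfigure to contain the robber, who by assumption is always within distance $k$ of some $v\in D$. First-moment / greedy arguments, essentially identical to those used for the classical upper bounds in~\cite{lp2,bpw}, give a.a.s.\ dominating sets of size
\[
\gamma(\G(n,p)^{j})=O(d^{j}),\qquad \gamma(\G(n,p)^{j-1})=O\!\left(\frac{n\log n}{d^{j-1}}\right),
\]
in regimes (1) and (2) respectively; multiplying by $\Delta=(1+o(1))d$ yields the upper bounds $\xi=O(d^{j+1})$ and $\xi=O((n\log n)/d^{j-1})$. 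In regime (3), where $c\cdot\Delta$ still carries a spurious $\log n$ factor, I would sharpen to $\xi=O(n)$ by a direct construction exploiting the fact that $\G(n,p)$ has diameter $2$ in this range, together with a small auxiliary pool of cops that can ``relay'' the shadow in a single round.

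For the matching lower bounds I would adapt the robber's expansion-based strategy from~\cite{lp2,bkl}. A.a.s., $\G(n,p)$ satisfies uniform neighborhood expansion: for every $S\subseteq V$ that is not too large, $|N^{k}(S)|=(1+o(1))d^{k}|S|$. Given any cop configuration occupying an edge set $F$, call a vertex \emph{blocked} if a fixed positive fraction of its incident edges lie in $F$; a double-counting argument forces the number of blocked vertices to be $O(|F|/d)$. The robber then maintains the invariant that she sits on an unblocked vertex and has a short path, through edges of the complement of $F$, to another unblocked vertex. Expansion guarantees that such an escape exists as long as $|F|=o(d^{j+1})$ in regime (1), $|F|=o(n/d^{j-1})$ in regime (2), and $|F|=o(n)$ in regime (3), producing the matching lower bounds.

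The main technical obstacle will be the lower bound, most acutely in regime (2): the robber must avoid being slowly funneled, over several rounds, into a configuration from which no short escape exists. Making the robber's invariant robust — controlling the joint behaviour of the blocked-vertex set and the local structure of $\G(n,p)$ along her entire trajectory, rather than for a single frozen cop configuration — will be the delicate heart of the argument, and parallels (but is strictly more involved than) the corresponding expansion analysis used for the classical cop number.
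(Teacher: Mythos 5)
Your high-level heuristic (the answer is the classical cop number times $\Delta\sim d$) matches the theorem, but both halves of your plan have genuine gaps. On the upper bound, the reduction $\xi(G)\le\gamma(G^{k})\cdot\Delta(G)$ is neither justified nor quantitatively adequate. It is unjustified because for $k\ge 2$ the cops parked at a dominating vertex of $G^{k}$ are $k$ rounds away from the robber, who moves $k$ times while they travel; a ``BFS shadowing rule'' does not explain how they end up occupying \emph{all} $d$ edges at her (unknown future) location simultaneously. It is quantitatively wrong in regime (1): any dominating set of $G^{j}$ has size at least $n/(\Delta(G^{j})+1)=\Theta(n/d^{j})$, which in the regime $d^{2j}<n<d^{2j+1}$ lies strictly between $d^{j}$ and $d^{j+1}$, so your claim $\gamma(\G(n,p)^{j})=O(d^{j})$ is false and the product $\gamma(G^{j})\Delta$ overshoots $d^{j+1}$ by up to a factor of $d$ (in regime (2) your arithmetic $O(n\log n/d^{j-1})\cdot d=O(n\log n/d^{j-1})$ also does not parse). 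The paper's mechanism is different: cops are placed on \emph{random edges} with density $Cd^{r}/n$ per edge (total $\Theta(d^{r+1})$), and when the robber appears at $v$ one verifies Hall's condition to match the $\le 2d^{r+1}$ edges of the boundary $X$ between $N_{r}(v)$ and $N_{r+1}(v)$ with distinct cops at distance $\le r+1$; since cops move first, $X$ is sealed before the robber can cross it, she is trapped in $N_{r}[v]$, and an auxiliary team of $O(d^{r+1})$ cops sweeps the interior. The factor-$d$ gain over the classical bound comes precisely from there being $dn/2$ edges rather than $n$ vertices on which to scatter cops, not from multiplying a dominating set by $\Delta$. (Regime (3) is handled much more simply, via a perfect or near-perfect matching: two cops per matching edge, one associated with each vertex, give $\xi\le n$.)

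On the lower bound, your single-level invariant (``blocked'' vertices, of which there are $O(|F|/d)$) only controls cops at distance $0$ or $1$ from the robber and therefore cannot yield $\Omega(d^{j+1})$ for $j\ge 2$: with $|F|$ slightly larger than $d^{2}$ the cops could in principle block every neighbour of the robber, so this argument alone stalls at $\Omega(d^{2})$. The difficulty you flag (being ``slowly funneled'') is exactly the point, and resolving it requires the multi-level invariant the paper imports from the classical zigzag proof: a vertex $v$ is \emph{safe} if, excluding a designated \emph{deadly neighbour} $x$, the number of cops in $E_{2i}(v)$ and $E_{2i+1}(v)$ is at most $\bigl(d/(30c(2j+1))\bigr)^{i+1}$ for each $i\le j-1$; one then counts, for each $r\le 2j$, the \emph{$r$-dangerous} neighbours using the a.a.s.\ bound on the number of short paths between two vertices (Lemma~\ref{lem:elem2}) and shows fewer than $d/2$ neighbours are dangerous at any level, so the robber can always step to a vertex that remains safe after the cops' reply. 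Without specifying thresholds at every distance scale and proving they are preserved round after round, the lower bounds in regimes (1) with $j\ge 2$ and (2) do not follow. In short: the statement of the theorem is right, but the proposed proofs of both inequalities need to be replaced by the ball-sealing/Hall argument and the safe-vertex potential argument, respectively.
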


It follows that a.a.s.\ $\log_n  \xi(\G(n,n^{x-1}))$ is asymptotic to the function $g(x)$ shown in Figure~\ref{fig1} (denoted in red). The fact the conjecture holds is associated with the observation that $g(x) - f(x) = x$, which is equivalent to saying that a.a.s.\ the ratio $\xi(\G(n,p)) / c(\G(n,p)) = d n^{o(1)} = \Delta(\G(n,p)) \cdot n^{o(1)}$. Moreover, let us mention that Theorem~\ref{thm:main} implies that the conjecture is best possible (again, up to a constant or an $O(\log n)$ multiplicative factors for corresponding ranges of $p$).

\bigskip

Note that in the above result we skip the case when $np=n^{1/k+o(1)}$ for some positive integer $k$ or $np=n^{o(1)}$. It is done for a technical reason: an argument for the lower bound for $\xi(\G(n,p))$ uses a technical lemma from~\cite{lp2} that, in turn, uses Corollary 2.6 from~\cite{Vu} which is stated only for $np=n^{\alpha+o(1)}$, where $\alpha \neq 1/k$ for any positive integer $k$. Clearly, one can repeat the argument given in~\cite{Vu}, which is a very nice but slightly technical application of the polynomial concentration method inequality by Kim and Vu. However, in order to make the paper easier and more compact, a ready-to-use lemma from~\cite{lp2} is used and we concentrate on the ``linear'' parts of the graph of the zigzag function.  Nonetheless, similarly to the corresponding result for $c(\G(n,p))$, one can expect that, up to a  factor of $\log^{O(1)}n$,  the result extends naturally also to the case  $np=n^{1/k+o(1)}$ as well.  

On the other hand, there is no problem with the upper bound so the case when $np=n^{1/k+o(1)}$ for some positive integer $k$ is also investigated (see below for a precise statement). Moreover, some expansion properties that were used to prove that Meyniel's conjecture holds for $\G(n,p)$~\cite{PW_gnp} are incorporated here to investigate sparser graphs.  

The rest of the paper is devoted to prove Theorem~\ref{thm:main}.

\section{Proof of Theorem~\ref{thm:main}}

\subsection{Typical properties of $\G(n,p)$ and useful inequalities}

Let us start by listing some typical properties of $\G(n,p)$. These observations are part of folklore and can be found in many places, so we will usually skip proofs, pointing to corresponding results in existing literature. Let $N_i(v)$ denote the set of vertices at distance $i$ from $v$, and let $N_i[v]$ denote the set of vertices within distance $i$ of $v$, that is, $N_i[v] = \bigcup_{0 \le j \le i} N_j(v)$. For simplicity, we use $N[v]$ to denote $N_1[v]$, and $N(v)$ to denote $N_1(v)$. Since cops occupy edges but the robber occupies vertices, we will need to investigate the set of edges at ``distance'' $i$ from a given vertex $v$ that we denote by $E_i(v)$. Formally, $E_i(v)$ consists of edges between $N_{i-1}(v)$ and $N_i(v)$, and within $N_{i-1}(v)$. In particular, $E_1(v)$ is the set of edges incident to $v$. Finally, let $P_i(v,w)$ denote the number of paths of length $i$ joining $v$ and $w$.

\bigskip 

Let us start with the following lemma. 

\begin{lemma}\label{lem:elem1}
Let $d=d(n) = p(n-1) \ge \log^3 n$. Then, there exists a positive constant $c$ such that a.a.s.\ the following properties hold in $\G(n,p) = (V,E)$.
\begin{enumerate}
\item Let $S \subseteq V$ be any set of $s=|S|$ vertices, and let $r \in \N$. Then
$$
\left| \bigcup_{v \in S} N_r[v] \right| \ge c \min\{s d^r, n \}.
$$
Moreover, if $s$ and $r$ are such that $s d^r < n / \log n$, then
$$
\left| \bigcup_{v \in S} N_r[v] \right| = (1+o(1)) s d^r.
$$
\item $\G(n,p)$ is connected.
\item Let $r = r(n)$ be the largest integer such that $d^r \le \sqrt{n \log n}$. Then, for every vertex $v \in V$ and $w \in N_{r+1}(v)$, the number of edges from $w$ to $N_r(v)$ is at most $b$, where 
$$
b = 
\begin{cases}
250 & \text{ if } d \le n^{0.49} \\
\frac {3 \log n}{\log \log n} & \text{ if } n^{0.49} < d \le \sqrt{n}.
\end{cases}
$$
\end{enumerate}
\end{lemma}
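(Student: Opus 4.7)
The plan is to prove the three parts via a breadth-first exploration of $\G(n,p)$, each coupled with a Chernoff-type concentration and a union bound.

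For part (i), expose the edges of $\G(n,p)$ one BFS layer at a time starting from $S$. Write $B_i = \bigcup_{v \in S} N_{\le i}(v)$ and $T_i = B_i \setminus B_{i-1}$, with $B_0 = T_0 = S$. Conditional on what is already revealed, each vertex of $V \setminus B_{i-1}$ lies in $T_i$ independently with probability $1 - (1-p)^{|T_{i-1}|}$, so $|T_i|$ is binomial with mean $(1+o(1))(n-|B_{i-1}|)\,p\,|T_{i-1}|$ whenever $p|T_{i-1}| = o(1)$. While $|B_{i-1}| = o(n)$, this equals $(1+o(1))d\,|T_{i-1}|$, and iterating yields $|T_i| = (1+o(1))sd^i$---hence the sharp ``moreover'' bound---throughout the range $sd^i < n/\log n$. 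Since $d \ge \log^3 n$, the Chernoff tail at each step is comfortably smaller than any needed union bound over the $\binom{n}{s} \le n^s$ choices of $S$ and the $O(\log n)$ values of $i$. Once $|B_i|$ reaches $\Theta(n/\log n)$, one further step of the same argument (now accounting for self-collisions via a coupon-collector comparison) produces $|B_{i+1}| \ge c\,n$, giving the general $\min\{sd^r,n\}$ bound. Part (ii) is then immediate: (i) with $s=1$ forces $|N_{\le r}[v]| \ge c\,n$ for every $v$ once $d^r \ge n/\log n$, and any two such linearly-sized sets must intersect, so $\G(n,p)$ is connected.

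For part (iii), first invoke (i) to conclude $|N_r(v)| \le (1+o(1))d^r \le \sqrt{n\log n}$ uniformly in $v$. The key observation is to control the conditional mean $p\,|N_r(v)| \le d^{r+1}/(n-1)$ by cases. When $d \le n^{0.49}$, the maximality of $r$ forces $d^{r+1} \le d\sqrt{n\log n} \le n^{0.99}\sqrt{\log n}$, so the mean is $o(1)$. When $n^{0.49} < d \le \sqrt n$, the inequalities $d^2 > n^{0.98} > \sqrt{n\log n}$ and $d \le \sqrt n < \sqrt{n\log n}$ pin down $r=1$, making the mean $pd = d^2/(n-1) \le 1+o(1)$. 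Reveal the BFS from $v$ up to depth $r$, which makes $N_r(v)$ measurable; for every $w \notin N_{\le r}[v]$, the count of edges $w \to N_r(v)$ is then an independent $\mathrm{Bin}(|N_r(v)|, p)$. The elementary tail $\Prob(\mathrm{Bin}(N,q) \ge k) \le (eNq/k)^k$, applied with $k=250$ in the first regime or $k = 3\log n/\log\log n$ in the second, gives a per-pair probability of at most $n^{-3}$, and a union bound over the $O(n^2)$ pairs $(v,w)$ closes the argument.

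The step I expect to need the most care is the bookkeeping inside part (i): iterating a small multiplicative error through up to $\log_d n$ layers while keeping the cumulative factor $1+o(1)$, and cleanly switching between the ``small'' regime $sd^i < n/\log n$ (where exact asymptotics are needed) and the ``bulk'' regime $|B_i| = \Theta(n)$ (where only a constant suffices). The assumption $d \ge \log^3 n$ is precisely what provides enough room for the geometric sum of per-level Chernoff deviations $\sum_i \sqrt{\log n/(sd^i)} = O(\sqrt{\log n/(sd)})$ to be $o(1)$, so the $(1+o(1))$ conclusion genuinely survives the iteration.
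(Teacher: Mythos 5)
Your treatment of part (iii) is the same first-moment/union-bound computation the paper gives (expose $N_r[v]$, use part (i) to get $|N_r(v)|\le 2d^r$, bound the conditional tail by roughly $(2ed^{r+1}/(bn))^b$, and split at $d=n^{0.49}$ where $r$ drops to $1$), and your BFS-plus-Chernoff sketch of part (i) is the standard argument that the paper simply imports from~\cite{PW_gnp}, so the proposal is essentially the paper's proof. The one slip is in part (ii): two sets of size at least $cn$ need not intersect when $c\le 1/2$, so you should either run one more expansion step to push the covered fraction to $1-o(1)$ (each outside vertex is missed with probability $(1-p)^{\Theta(n/\log n)}=o(1)$) or note that a.a.s.\ there is an edge between any two disjoint linear-sized sets; with that easy repair the connectivity claim (which the paper anyway cites as folklore) goes through.
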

\begin{proof}
The proof of part (i) can be found in~\cite{PW_gnp}. The fact that $\G(n,p)$ is connected is well known (see, for example,~\cite{JLR}). In fact, the (sharp) threshold for connectivity is $p = \log n / n$ so this property holds for even sparser graphs. 

For part (iii), let us first expose the $r$th neighbourhood of $v$.  By part (i), we may assume that $|N_r[v]|=(1+o(1)) d^{r} < 2 d^r$. For any $w \in V \setminus N_r[v]$, the probability that there are at least $b$ edges joining $w$ to $N_r(v)$ is at most
$$
q := {2 d^r \choose b} p^b \le \left( \frac{2ed^r}{b} \right)^b \left( \frac dn \right)^b = \left( \frac{2ed^{r+1}}{bn} \right)^b.
$$
If $d \le n^{0.49}$, then
$$
q \le \left( \frac{2e d \sqrt{n \log n}}{bn} \right)^b \le n^{-0.005 b} = o(n^{-2}),
$$
provided that $b$ is large enough (say, $b = 250$). For $n^{0.49} < d \le \sqrt{n}$ (and so $r=1$), we observe that
$$
q \le \left( \frac{2e}{b} \right)^b = \exp \left( - (1+o(1)) b \log b \right) = o(n^{-2}),
$$
provided $b = 3 \log n / \log \log n$. The claim follows by the union bound over all pairs $v, w$. The proof of the lemma is finished.
\end{proof}

\bigskip 

The next lemma can be found in~\cite{lp2}. (See also~\cite{lazy_gnp} for its extension.)

\begin{lemma}\label{lem:elem2}
Let $\eps$ and $\alpha$ be constants such that $0<\eps<0.1$, $\eps<\alpha<1-\eps$, and let $d=d(n)=p(n-1)=n^{\alpha+o(1)}$. Let $\ell \in \N$ be the largest integer such that $\ell < 1/\alpha$. Then, a.a.s.\ for every vertex $v$ of $\G(n,p)$ the following properties hold.
\begin{enumerate}
\item [(i)] If $w\in N_i[v]$ for some $i$ with $2\le i \le \ell$, then $P_i(v,w) \le \frac {3}{1-i\alpha}$.
\item [(ii)] If $w\in N_{\ell+1}[v]$ and $d^{\ell+1} \ge 7 n \log n$, then $P_{\ell+1}(v,w) \le \frac{6}{1-\ell \alpha}\frac{d^{\ell+1}}{n}$.
\item [(iii)] If $w\in N_{\ell+1}[v]$ and $d^{\ell+1} < 7 n \log n$, then $P_{\ell+1}(v,w) \le \frac{42}{1-\ell \alpha} \log n$.
\end{enumerate}
Moreover, a.a.s.\
\begin{enumerate}
\item [(iv)] Every edge of $\G(n,p)$ is contained in at most $\eps d$ cycles of length at most $\ell+2$.
\end{enumerate}
\end{lemma}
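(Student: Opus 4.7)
The plan is to reduce all four statements to expected-value computations and then promote these to uniform bounds via the Kim--Vu polynomial concentration inequality (Corollary~2.6 of~\cite{Vu}) combined with a union bound. Both $P_i(v,w)$ and the number of short cycles through a fixed edge are multilinear polynomials of degree at most $\ell+2$ in the independent edge indicators, so Kim--Vu is the natural tool.

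For statements (i)--(iii), I would first compute
$$\E[P_i(v,w)] \ = \ (n-2)(n-3)\cdots(n-i)\, p^i \ = \ (1+o(1))\, \frac{d^i}{n} \ = \ n^{i\alpha - 1 + o(1)}.$$
For $i \le \ell$ the exponent $i\alpha - 1$ is negative, so $\E[P_i(v,w)] = o(1)$, which is exactly why the target bound in (i) is a constant. For $i = \ell+1$ the expectation is $(1+o(1))\, d^{\ell+1}/n$: this is of order $d^{\ell+1}/n$ when $d^{\ell+1} \ge 7 n \log n$ (yielding (ii)), and is $O(\log n)$ otherwise (yielding (iii)). For statement (iv), the expected number of cycles of length $k$ through a fixed edge is $(1+o(1))\, d^{k-1}/n$; summing over $3 \le k \le \ell+2$ gives $O(d^{\ell+1}/n)$, and since $d^\ell = n^{\ell \alpha + o(1)} = o(n)$ the total is $o(d)$, comfortably below $\eps d$.

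Step two is concentration. The relevant partial derivatives of $P_i(v,w)$ (which count path-completions given a prescribed set of $j$ edges) are themselves bounded in expectation by a power of $d/n$ times a constant depending only on $i$. Plugging these estimates into Kim--Vu yields tail bounds of the form $\Prob[P_i(v,w) > T_i] \le n^{-\omega(1)}$, where $T_i$ is the target bound in the corresponding part. A union bound over the $O(n^2)$ pairs $(v,w)$, respectively over the $O(n^2)$ edges in (iv), then upgrades the estimates to hold simultaneously a.a.s.

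The principal obstacle is isolating the explicit constants $3$, $6$, $42$, and $\eps$ rather than merely $O(1)$. The factor $1/(1-i\alpha)$ is not an artifact of the concentration inequality; rather, it arises from summing the contributions of degenerate configurations in which several of the hypothetical $P_i(v,w)$ paths share internal vertices or edges. Each such shared-structure correction contributes a term behaving like $d^j/n$ for some $j < i$, and the resulting geometric series produces the $1/(1-i\alpha)$ denominator. Tracking this bookkeeping cleanly is where the argument of~\cite{lp2} concentrates its effort; the remaining steps are a routine application of Kim--Vu followed by the union bound.
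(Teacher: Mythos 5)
First, note that the paper offers no proof of this lemma at all: it is imported verbatim from~\cite{lp2}, and the introduction explains that the proof there rests on Corollary~2.6 of~\cite{Vu}, itself an application of the Kim--Vu polynomial concentration method. So your overall route --- first moments plus polynomial concentration plus a union bound --- is the intended one in spirit, and your expectation computations are correct: $\E[P_i(v,w)]=(1+o(1))d^i/n=n^{i\alpha-1+o(1)}$, which is $n^{-\Omega(1)}$ for $i\le\ell$ and of order $d^{\ell+1}/n$ for $i=\ell+1$, and the expected number of short cycles through a fixed edge is indeed $O(d^{\ell+1}/n)=o(d)$.

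The gap is in the concentration step for parts (i) and (iii), which you describe as ``a routine application of Kim--Vu.'' It is not. For $Y=P_i(v,w)$ the $i$-th order derivative parameter satisfies $E_i=1$ (differentiating with respect to all $i$ edges of a potential path gives the constant $1$), so $E_0,E_1\ge 1$ and the deviation window guaranteed by the Kim--Vu inequality at failure probability $o(n^{-2})$ is at least of order $\lambda^i\ge\log^i n$. No direct application of the inequality can therefore certify $P_i(v,w)\le 3/(1-i\alpha)$ --- a \emph{constant} threshold while $\E Y=o(1)$ --- with the inverse-polynomial error needed for the union bound over $n^2$ pairs; the same objection applies to the $O(\log n)$ threshold in (iii). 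What actually produces these bounds (and this is the content of Vu's Corollary~2.6, in the tradition of Spencer's extension counting) is a union bound over the possible \emph{shapes} of the union of $k$ paths between $v$ and $w$, showing that the expected number of pairs admitting $k$ paths is $o(1)$ once $k(1-i\alpha)>2$; this balancing against the $n^2$ pairs is where the denominator $1-i\alpha$ comes from, not from a geometric series over degenerate configurations as you suggest. Handling the overlapping shapes is exactly the delicate point, and it is also why the result is only stated for $np=n^{\alpha+o(1)}$ with $\alpha\neq 1/k$ --- a restriction the paper explicitly flags and which your sketch never addresses. Your plan is salvageable, but as written the central step would fail.
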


\bigskip

We will also use the following variant of Chernoff's bound (see, for example,~\cite{JLR}):
\begin{lemma}[\textbf{Chernoff Bound}]
If $X$ is a binomial random variable with expectation $\mu$, and $0<\delta<1$, then 
$$
\Pr[X < (1-\delta)\mu] \le \exp \left( -\frac{\delta^2 \mu}{2} \right)
$$ 
and if $\delta > 0$,
$$
\Pr [X > (1+\delta)\mu] \le \exp \left(-\frac{\delta^2 \mu}{2+\delta} \right).
$$
\end{lemma}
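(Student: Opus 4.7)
The plan is to use the standard Cram\'er--Chernoff exponential moment method. Writing $X = \sum_{i=1}^{n} X_i$ where the $X_i$ are independent Bernoulli variables with $\E X_i = p_i$ and $\mu = \sum_i p_i$, I would start by observing that for any $t > 0$, Markov's inequality applied to the random variable $e^{tX}$ yields
$$
\Prob[X \ge (1+\delta)\mu] = \Prob\bigl[e^{tX} \ge e^{t(1+\delta)\mu}\bigr] \le e^{-t(1+\delta)\mu} \, \E[e^{tX}].
$$
By independence and the elementary inequality $1+x \le e^x$,
$$
\E[e^{tX}] = \prod_{i=1}^{n} \bigl(1 - p_i + p_i e^t\bigr) \le \prod_{i=1}^{n} e^{p_i(e^t - 1)} = e^{\mu(e^t - 1)}.
$$

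For the upper tail I would then choose the optimizing value $t = \log(1+\delta)$, which turns the two factors above into
$$
\Prob[X \ge (1+\delta)\mu] \le \left( \frac{e^\delta}{(1+\delta)^{1+\delta}} \right)^{\mu}.
$$
To recover the form stated in the lemma, the remaining work is the elementary scalar estimate
$$
(1+\delta)\log(1+\delta) - \delta \ge \frac{\delta^2}{2+\delta} \qquad \text{for all } \delta > 0,
$$
which one checks by noting that the difference $h(\delta) := (1+\delta)\log(1+\delta) - \delta - \frac{\delta^2}{2+\delta}$ satisfies $h(0) = 0$ and $h'(\delta) \ge 0$ (this in turn reduces to a clean polynomial inequality after clearing denominators).

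For the lower tail I would run exactly the same recipe but with a negative parameter, i.e.\ apply Markov to $e^{-sX}$ for $s>0$:
$$
\Prob[X \le (1-\delta)\mu] \le e^{s(1-\delta)\mu} \, \E[e^{-sX}] \le \exp\bigl(s(1-\delta)\mu + \mu(e^{-s} - 1)\bigr).
$$
Setting $s = -\log(1-\delta)$ then gives
$$
\Prob[X \le (1-\delta)\mu] \le \left(\frac{e^{-\delta}}{(1-\delta)^{1-\delta}}\right)^{\mu},
$$
and the desired bound $\exp(-\delta^2\mu/2)$ follows from the companion inequality $(1-\delta)\log(1-\delta) + \delta \ge \delta^2/2$ on $(0,1)$, again a short one-variable calculus verification.

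The main obstacle, if one can call it that, is not probabilistic at all: it is the pair of scalar inequalities needed to convert the sharp but awkward Kullback--Leibler-type expressions $e^{\delta}/(1+\delta)^{1+\delta}$ and $e^{-\delta}/(1-\delta)^{1-\delta}$ into the clean Gaussian-style exponents $\exp(-\delta^2/(2+\delta))$ and $\exp(-\delta^2/2)$. These are routine Taylor expansions or monotonicity arguments, but they must be carried out carefully to preserve exactly the constants stated in the lemma; everything else is a direct application of independence and the bound $1+x \le e^x$.
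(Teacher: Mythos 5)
Your proposal is correct: the exponential-moment (Cram\'er--Chernoff) argument with the optimizing choices $t=\log(1+\delta)$ and $s=-\log(1-\delta)$, followed by the scalar inequalities $(1+\delta)\log(1+\delta)-\delta\ge \delta^2/(2+\delta)$ and $(1-\delta)\log(1-\delta)+\delta\ge \delta^2/2$, does yield exactly the two stated bounds, and both scalar inequalities check out (the first via $\log(1+\delta)\ge 2\delta/(2+\delta)$ or a second differentiation, the second since $-\log(1-\delta)\ge\delta$). The paper itself gives no proof --- it quotes this lemma from the reference \cite{JLR} --- and your argument is precisely the standard one found there, so there is nothing to compare beyond noting agreement.
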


\subsection{Upper bound}

First, let us deal with dense graphs that correspond to part (iii) of Theorem~\ref{thm:main}. In fact, we are going to make a simple observation that the containability number is linear if $G$ has a perfect or a near-perfect matching. The result will follow since it is well-known that for $p=p(n)$ such that $pn - \log n \to \infty$, $\G(n,p)$ has a perfect (or a near-perfect) matching a.a.s. (As usual, see~\cite{JLR}, for more details.)

\begin{lemma}\label{lem:perfect}
Suppose that $G$ on $n$ vertices has a perfect matching ($n$ is even) or a near-perfect matching ($n$ is odd). Then, $\xi(G) \le n$.
\end{lemma}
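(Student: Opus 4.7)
The strategy uses exactly $n$ cops and wins after a single round of play. Fix a perfect matching $M = \{\{u_i, v_i\} : 1 \le i \le n/2\}$ when $n$ is even, or a near-perfect matching together with a single unmatched vertex $v^*$ when $n$ is odd. For each matched vertex $u$, write $m(u)$ for its matching partner. The initial placement is to put two cops on every edge of $M$; in the odd case, provided $v^*$ has at least one neighbor $w$, I also put one additional cop on the edge $\{v^*, w\}$. (If instead $v^*$ is isolated, the extra cop is simply unused and the trivial case ``robber at $v^*$'' is won immediately, since that vertex has no incident edges to cover.) This uses a total of at most $n$ cops.

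Once the robber is placed at some vertex $r$, the cops make one coordinated move: for every neighbor $u$ of $r$, one cop is sent to the edge $\{u, r\}$. If $u$ is matched, the cop is taken from the matching edge $\{u, m(u)\}$; if $u = v^*$, the cop is taken from $\{v^*, w\}$. All such moves are legal because each source edge shares the vertex $u$ with its target $\{u, r\}$. The matching edge $\{r, m(r)\}$, in the case that $r$ itself is matched, is already covered by its two resident cops and needs no redirection. A matching edge $\{u, m(u)\}$ is called upon for at most two redirections (one each if $u$ and $m(u)$ are both neighbors of $r$), exactly matching its two cops, and the extra cop is called upon for at most one redirection, so there is never a shortage.

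After this single cop move, every edge incident to $r$ is occupied by a cop, so the robber has no legal move and must pass. The containment condition is therefore satisfied at the end of the first round, which gives $\xi(G) \le n$. The one care point is the odd case, where a single extra cop on an edge incident to $v^*$ (or the triviality of an isolated $v^*$) handles the unmatched vertex; I do not expect any serious obstacle, since the argument is essentially a pigeonhole-style double-cover using the matching.
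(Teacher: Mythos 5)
Your proof is correct and follows essentially the same strategy as the paper's: two cops on each matching edge (one implicitly assigned to each endpoint), an extra cop near the unmatched vertex in the odd case, and a single coordinated move sending the cop assigned to each neighbour $u$ of the robber onto the edge $ur$. The handling of the special cases (the robber's own matching edge already being covered, and the isolated unmatched vertex) matches the paper as well.
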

\begin{proof}
Suppose first that $n$ is even. The cops start on the edges of a perfect matching; two cops occupy any edge of the matching for a total of $n$ cops. All vertices of $G$ can be associated with unique cops. The robber starts on some vertex $v$. One edge incident to $v$ (the edge $vv'$ that belongs to the perfect matching used) is already occupied by a cop (in fact, by two cops, associated with $v$ and $v'$). Moreover, the remaining cops can move so that all edges incident to $v$ are protected and the game ends. Indeed, for each edge $vu$, the cop associated with $u$ moves to $vu$. 

The case when $n$ is odd is also very easy. Two cops start on each edge of a near-perfect matching which matches all vertices but $u$. If $u$ is isolated, we may simply remove it from $G$ and arrive back to the case when $n$ is even. (Recall that the cops win if all edges incident with the robber are occupied by cops. As this property is vacuously true when the robber starts on an isolated vertex, we may assume that she does not start on $u$.) Hence, we may assume that $u$ is not isolated. We introduce one more cop on some edge incident to $u$. The total number of cops is at most $2 \cdot \frac {n-1}{2} + 1 = n$; again, each vertex of $G$ can be associated with a unique cop and the proof goes as before.
\end{proof}

\bigskip

Now, let us move to the following lemma that yields part (i) of Theorem~\ref{thm:main}. We combine and adjust ideas from both~\cite{lp2} and~\cite{PW_gnp} in order to include much sparser graphs.  Cases when $\alpha = 1/k$ for some positive integer $k$ are also covered. 

\begin{lemma}
Let $d=d(n) = p(n-1) \ge \log^3 n$. Suppose that there exists a positive integer $r=r(n)$ such that 
$$
(n \log n)^{\frac {1}{2r+1}} \le d \le (n \log n)^{\frac {1}{2r}}.
$$
Then, a.a.s.\ 
$$
\xi(\G(n,p)) = O(d^{r+1}).
$$
\end{lemma}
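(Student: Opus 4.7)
The plan is to place containment cops by selecting a random \emph{anchor set} $S\subseteq V(\G(n,p))$, including each vertex independently with probability $p_S$ of order $d^r/n$, and putting one cop on every edge incident to $S$. Using $\Delta(\G(n,p))=(1+o(1))d$ a.a.s.\ (standard Chernoff on $\mathrm{Bin}(n-1,p)$), this gives $|S|\cdot(1+o(1))d=O(d^{r+1})$ cops in total, matching the target bound.

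Given a robber at a vertex $v$, the strategy is to locate an anchor $u\in S$ with $k:=\operatorname{dist}(u,v)\le r$ and to slide the $\deg(u)=(1+o(1))d$ cops initially on the edges at $u$ along length-$k$ walks so that they end up on the $\deg(v)$ edges at $v$. The existence of such an anchor $u$ for every $v$ follows from the expansion estimate $|N_r[v]|=\Omega(d^r)$ in Lemma~\ref{lem:elem1}(i) together with the hypothesis $d^{2r+1}\ge n\log n$; depending on constants one may need to inflate $|S|$ by a $\log n$ factor and absorb this into the $O(\cdot)$. The main combinatorial task is then to construct a valid assignment of distinct source edges $(u,y_i)$ to distinct target edges $(v,w_i)$ connected by such walks, and this is where I would invoke Lemma~\ref{lem:elem2}: parts (i)--(iii) bound the number of short paths joining any two vertices, and part (iv) bounds the number of short cycles through any edge, so the $r$-ball around $u$ is locally tree-like enough for a Hall-type matching to deliver the required system of near-disjoint routing walks, each transporting one cop from an edge at $u$ to the assigned edge at $v$ in $k\le r$ rounds.

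The main obstacle is preventing the robber from escaping during the $k\le r$ rounds that the routing takes, since she receives $k$ free moves during that time. My plan is a two-step fix: first, order the target edges $(v,w)$ so that walks covering the most threatening escape edges finish first, sealing $v$ progressively; second, if the robber nevertheless relocates to some $v'$, use the density of $S$ to supply a fresh anchor $u'\in S$ near $v'$ and restart the routing, controlling the number of restarts via a monotone potential such as the distance from the robber to the nearest already-converged anchor cluster. The delicate accounting needed to ensure that cops are not double-booked across successive restarts, and that each partial walk avoids the edges by which the robber could cross a cop, is where I expect the real technical work to concentrate; Lemma~\ref{lem:elem2}(iv) will likely play the role it plays in the classical argument, by confining the analysis to an essentially tree-like local structure where walk collisions can be counted explicitly.
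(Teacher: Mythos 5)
Your cop placement and counting are fine (the paper uses i.i.d.\ random \emph{edges} with probability $Cd^r/n$ rather than all edges at random anchor vertices, but either gives $O(d^{r+1})$ cops), and invoking Hall's theorem for the routing is the right instinct. The genuine gap is in the containment strategy itself: you aim the cops at the edges incident to the robber's \emph{current} vertex $v$, and you correctly identify that she gets $k\le r$ free moves while the routing happens --- but neither of your fixes closes this hole. ``Sealing $v$ progressively'' accomplishes nothing in this game, because the robber needs only one unoccupied incident edge to leave $v$, and she will have many until the final round of the routing. The restart scheme then has no termination guarantee: each restart costs up to $r$ rounds, during which the robber moves up to $r$ steps, so your proposed potential (distance to the nearest converged anchor cluster) can increase as fast as it decreases, and the cops already committed to a stale target cannot be cleanly recycled. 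This is not a technicality to be absorbed into ``delicate accounting''; it is the main difficulty of the problem.

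The paper's resolution is structurally different and worth internalizing: the cops do not chase the robber's position at all. Instead they occupy $X$, the set of \emph{all} edges between $N_r(v)$ and $N_{r+1}(v)$ for the robber's \emph{starting} vertex $v$, with $|X|\le 2d^{r+1}$. Each such edge is within distance $r+1$ of some cop (this is where Hall's condition enters, applied to the bipartite graph between $N_{r+1}(v)$ and the random cop set, using the fact from Lemma~\ref{lem:elem1}(iii) that each $w\in N_{r+1}(v)$ sends only $O(1)$ edges back to $N_r(v)$, so each vertex of $N_{r+1}(v)$ needs only a bounded number of cops). Since the cops move first, after their $(r+1)$st move the entire sphere of edges $X$ is occupied while the robber, having made only $r$ moves, is still inside $N_r[v]$; she can never cross $X$, so she is permanently trapped in $N_r[v]$, and an auxiliary team of $2d^{r+1}$ cops then floods every edge induced by $N_r[v]$ at leisure. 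Sealing a boundary the robber cannot reach in time, rather than the edges at her (moving) location, is the missing idea; without it your argument does not go through.
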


\begin{proof}
Since our aim is to prove that the desired bound holds a.a.s.\ for $\G(n,p)$, we may assume, without loss of generality, that a graph $G$ the players play on satisfies the properties stated in Lemma~\ref{lem:elem1}. A team of cops is determined by independently choosing each edge of $e \in E(G)$ to be occupied by a cop with probability $Cd^r/n$, where $C$ is a (large) constant to be determined soon. It follows from Lemma~\ref{lem:elem1}(i) that $G$ has $(1+o(1)) dn/2$ edges. Hence, the expected number of cops is equal to
$$
(1+o(1)) \frac {dn}{2} \cdot \frac {Cd^r}{n} = (1+o(1)) \frac {Cd^{r+1}}{2} .
$$
It follows from Chernoff's bound that the total number of cops is $\Theta(d^{r+1})$ a.a.s.

The robber appears at some vertex $v \in V(G)$. Let $X \subseteq E(G)$ be the set of edges between $N_r(v)$ and $N_{r+1}(v)$. It follows from Lemma~\ref{lem:elem1}(i) that 
$$
|X| \le (1+o(1)) d |N_r(v)| \le 2 d^{r+1}.
$$
Our goal is to show that with probability $1-o(n^{-1})$ it is possible to assign distinct cops to all edges $e$ in $X$ such that a cop assigned to $e$ is within distance $(r+1)$ of $e$. (Note that here, the probability refers to the randomness in distributing the cops; the graph $G$ is fixed.) If this can be done, then after the robber appears these cops can begin moving straight to their assigned destinations in $X$. Since the first move belongs to the cops, they have $(r+1)$ steps, after which the robber must still be inside $N_r[v]$, which is fully occupied by cops. She is ``trapped'' inside $N_r[v]$, so we can send an auxiliary team of, say, $2 d^{r+1}$ cops to go to every edge in the graph induced by $N_r[v]$, and the game ends. Hence, the cops will win with probability $1-o(n^{-1})$, for each possible starting vertex $v \in V(G)$. It will follow that the strategy gives a win for the cops a.a.s.  

Let $Y$ be the (random) set of edges occupied by cops. Instead of showing that the desired assignment between $X$ and $Y$ exists, we will show that it is possible to assign $b(u)$ distinct cops to all vertices $u$ of $N_{r+1}(v)$, where $b(u)$ is the number of neighbours of $u$ that are in $N_r(v)$ (that is, the number of edges of $X$ incident to $u$) and such that each cop assigned to $u$ is within distance $(r+1)$ from $u$. 
(Note that this time ``distance'' is measured between vertex $u$ and edges which is non-standard. In this paper, we define it as follows: edge $e$ is at distance at most $(r+1)$ from $u$ if $e$ is at distance at most $r$ from some edge adjacent to $u$.)
Indeed, if this can be done, assigned cops run to $u$, after $r$ rounds they are incident to $u$, and then spread to edges between $u$ and $N_r(v)$; the entire $X$ is occupied by cops. In order to show that the required assignment between $N_{r+1}(v)$ and $Y$ exists with probability $1-o(n^{-1})$, we show that with this probability, $N_{r+1}(v)$ satisfies Hall's condition for matchings in bipartite graphs. 

Suppose first that $d \le n^{0.49}$ and fix $b=250$. It follows from Lemma~\ref{lem:elem1}(iii) that $b(u) \le b$ for every $u \in N_{r+1}(v)$. Set 
$$
k_0 = \max \{ k : k d^{r} < n \}.
$$ 
Let $K \subseteq N_{r+1}(v)$ with $|K|=k \le k_0$. We may apply Lemma~\ref{lem:elem1}(i) to bound the size of $\bigcup_{u \in K} N_r[u]$ and the number of edges incident to each vertex. It follows that the number of edges of $Y$ that are incident to some vertex in $\bigcup_{u \in K} N_r[u]$ can be stochastically bounded from below by the binomial random variable ${\rm Bin}(\lfloor c k d^r \cdot (d/3) \rfloor, Cd^r/n)$, whose expected value is asymptotic to $(Cc/3) k d^{2r+1} / n \ge (Cc/3) k \log n$. Using Chernoff's bound we get that the probability that there are fewer than $bk$ edges of $Y$ incident to this set of vertices is less than $\exp(-4k \log n)$ when $C$ is a sufficiently large constant. Hence, the probability that the sufficient condition in the statement of Hall's theorem fails for at least one set $K$ with $|K|\le k_0$ is at most
$$
\sum_{k=1}^{k_0} {|N_{r+1}(v)| \choose k} \exp( - 4 k \log n)  \le \sum_{k=1}^{k_0} n^k \exp( - 4 k \log n) = o(n^{-1}).
$$

Now consider any set  $K \subseteq N_{r+1}(v)$ with $k_0 < |K| = k \le |N_{r+1}(v)| \le 2 d^{r+1}$ (if such a set exists). Lemma~\ref{lem:elem1}(i) implies that the size of $\bigcup_{u \in K} N_r[u]$ is at least  $cn$, so we expect at least $cn \cdot (d/3) \cdot Cd^r/n = (Cc/3) d^{r+1}$ edges of $Y$ incident to this set.  Again using Chernoff's bound, we deduce that the number of edges of $Y$ incident to this set is at least $2 b d^{r+1} \ge b |N_{r+1}(v)| \ge bk$ with probability at least $1-\exp(- 4 d^{r+1})$, by taking the constant $C$ to be large enough. Since
$$
\sum_{k=k_0+1}^{|N_{r+1}(v)|} {|N_{r+1}(v)| \choose k} \exp( - 4 d^{r+1} ) \le 2^{2 d^{r+1}} \exp( - 4 d^{r+1} ) = o(n^{-1}),
$$
the necessary condition in Hall's theorem holds with probability $1 - o(n^{-1})$. 

Finally, suppose that $d > n^{0.49}$. Since Lemma~\ref{lem:perfect} implies that the result holds for $d > \sqrt{n}$, we may assume that $d \le \sqrt{n}$. (In fact, for $d > \sqrt{n}$ we get a better bound of $n$ rather than $O(d^2)$ that we aim for.) This time, set $b = 3 \log \log n / \log n$ to make sure $b(u) \le b$ for all $u \in N_{r+1}(v)$. The proof is almost the same as before. For small sets of size at most $k_0 = \Theta(n/d)$, we expect $(Cc/3) k d^{3} / n \ge (Cc/3) k n^{0.47}$ edges, much more than we actually need, namely, $b k$. For large sets of size more than $k_0$, we modify the argument slightly and instead of assigning $b$ cops to each vertex of $N_{r+1}(v)$, we notice that the number of cops needed to assign is equal to $\sum_{u \in K} b(u) \le |X| \le 2 d^{r+1}$. (There might be some vertices of $N_{r+1}(v)$ that are incident to $b$ edges of $X$ but the total number of incident edges to $K$ is clearly at most $|X|$.) The rest is not affected and the proof is finished.
\end{proof}

\bigskip

The next lemma takes care of part (ii) of Theorem~\ref{thm:main}.

\begin{lemma}
Let $d=d(n) = p(n-1) \ge \log^3 n$. Suppose that there exists an integer $r=r(n) \ge 2$ such that 
$$
(n \log n)^{\frac {1}{2r}} \le d \le (n \log n)^{\frac {1}{2r-1}}.
$$
Then, a.a.s.\ 
$$
\xi(\G(n,p)) = O \left( \frac {n \log n}{d^{r-1}} \right).
$$
\end{lemma}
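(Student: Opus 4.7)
The plan is to mirror the strategy of the previous lemma, changing the trapping radius and the density of cops to fit the denser regime. Fix a large constant $C$ (to be pinned down later) and place cops on each edge of $G$ independently with probability $q = C \log n / d^r$. Since $|E(G)| = (1+o(1)) nd/2$ by Lemma~\ref{lem:elem1}(i), Chernoff shows that a.a.s.\ the total cop count is $\Theta(n \log n / d^{r-1})$, matching the target. We may also assume that $G$ satisfies the conclusions of Lemma~\ref{lem:elem1}.

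After the robber appears at some vertex $v$, the cops aim to trap her inside $N_{r-1}[v]$ by occupying the set $X$ of edges between $N_{r-1}(v)$ and $N_r(v)$; Lemma~\ref{lem:elem1}(i) gives $|X| \le 2 d^r$. Since cops move first, they have exactly $r$ moves to accomplish this before the robber's $r$-th move. For each $u \in N_r(v)$ let $b(u)$ denote the number of edges of $X$ incident to $u$. The inequalities $d^{r-1} \le (n \log n)^{1/2} \le d^r$ force the integer named ``$r$'' in Lemma~\ref{lem:elem1}(iii) to be $r-1$, and since $d \le (n \log n)^{1/(2r-1)} \le n^{0.49}$ for $r \ge 2$ we conclude $b(u) \le b := 250$. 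As in the previous lemma it then suffices to verify Hall's condition for the bipartite assignment where each $u \in N_r(v)$ must be matched to $b(u)$ distinct cops lying at edge-distance at most $r$ from $u$; equivalently, for every $K \subseteq N_r(v)$, to count cops on edges incident to $\bigcup_{u \in K} N_{r-1}[u]$.

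The verification splits into two regimes. For small $K$ of size $k \le k_0 := \lfloor n / d^{r-1} \rfloor$, Lemma~\ref{lem:elem1}(i) yields $|\bigcup_{u \in K} N_{r-1}[u]| \ge c k d^{r-1}$, hence at least $(c/3) k d^r$ incident edges and an expected cop count of $(Cc/3) k \log n$; Chernoff gives failure probability $\exp(-\Theta(k \log n))$, which after union-bounding over $\binom{|N_r(v)|}{k} \le n^k$ sets contributes $o(n^{-2})$ once $C$ is large. For large $k$ with $k_0 < k \le |N_r(v)| \le 2 d^r$, the union has at least $cn$ vertices, producing expected cop count at least $(Cc/3) n \log n / d^{r-1}$; because $d^{2r-1} \le n \log n$, this comfortably exceeds $bk \le 2 b d^r$ when $C$ is large, so Chernoff yields failure probability $\exp(-\Theta(n \log n / d^{r-1}))$, which dominates the $2^{|N_r(v)|} = 2^{O(d^r)}$ choices of $K$ thanks to $d^r \le n \log n / d^{r-1}$. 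A final union bound over the $n$ possible starting vertices $v$ still leaves only $o(1)$ error.

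Once Hall holds, we match $b(u)$ distinct cops to each $u \in N_r(v)$; every assigned cop reaches its intended edge of $X$ in at most $r$ moves, so $X$ is fully blocked in time and the robber is confined to $N_{r-1}[v]$. An auxiliary team of $O(d^r)$ cops then occupies every edge of the subgraph induced by $N_{r-1}[v]$ (whose edge-count is $O(d^r)$ by Lemma~\ref{lem:elem1}(i)), and this fits comfortably in the $O(n \log n / d^{r-1})$ budget since $d^{2r-1} \le n \log n$. The main technical point is that $q = C \log n / d^r$ must be small enough to keep the total cop count within budget yet large enough for both Chernoff bounds above to survive the required union bounds over $K$ and $v$; the hypothesis $d^{2r-1} \le n \log n$ is precisely what makes this double inequality feasible.
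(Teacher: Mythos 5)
Your proposal is correct and follows essentially the same route as the paper: the same cop density $C\log n/d^r$, the same trapping set $X$ between $N_{r-1}(v)$ and $N_r(v)$, the same Hall's-condition verification split at $k_0=\lfloor n/d^{r-1}\rfloor$, and the same use of $d^{2r-1}\le n\log n$ to make the large-set Chernoff bound and the auxiliary team fit the budget. The only difference is that you spell out a few details the paper leaves implicit (e.g., why Lemma~\ref{lem:elem1}(iii) applies with radius $r-1$ and $b=250$), which is fine.
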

\begin{proof}
We mimic the proof of the previous lemma so we skip details focusing only on differences. A team of cops is determined by independently choosing each edge of $e \in E(G)$ to be occupied by a cop with probability $C \log n / d^r$, for the total number of cops $\Theta(n \log n / d^{r-1})$ a.a.s.

The robber appears at some vertex $v \in V(G)$. This time, $X \subseteq E(G)$ is the set of edges between $N_{r-1}(v)$ and $N_{r}(v)$ and $|X| \le 2 d^r$. We show that it is possible to assign $b=250$ distinct cops to all vertices $u$ of $N_r(v)$ such that a cop assigned to $u$ is within ``distance'' $r$ from $u$. The definition of $k_0$ has to be adjusted. Set 
$$
k_0 = \max \{ k : k d^{r-1} < n \}.
$$ 
Let $K \subseteq N_{r}(v)$ with $|K|=k \le k_0$. The expected number of edges of $Y$ that are incident to some vertex in $\bigcup_{u \in K} N_{r-1}[u]$ is at least $(ckd^{r-1}) (d/3) (C \log n / d^r) = (Cc/3) k \log n$, and the rest of the argument is not affected. Now consider any set  $K \subseteq N_{r}(v)$ with $k_0 < |K| = k \le |N_{r}(v)| \le 2 d^{r}$ (if such a set exists). The size of $\bigcup_{u \in K} N_{r-1}[u]$ is at least  $cn$, so we expect at least 
$$
(cn) \left(\frac {d}{3} \right) \left( \frac {C \log n}{d^r} \right) = \frac {Cc d^r n \log n}{3 d^{2r-1}} \ge \frac {Cc d^r }{3} 
$$ 
edges of $Y$ incident to this set. Hence, the number of edges of $Y$ incident to this set is at least $2 b d^{r} \ge b |N_{r}(v)| \ge bk$ with probability at least $1-\exp(- 4 d^{r})$, by taking the constant $C$ to be large enough. The argument we had before works again, and the proof is finished.
\end{proof}

\subsection{Lower bound}

The proof of the lower bound is an adaptation of the proof used for the classic cop number in~\cite{lp2}. The two bounds, corresponding to parts (i) and (ii) in Theorem~\ref{thm:main}, are proved independently in the following two lemmas. 

\begin{lemma}\label{lemma:lowerbound1}
Let $\frac{1}{2j+1}<\alpha<\frac{1}{2j}$ for some integer $j\ge 1$, $c=c(j,\alpha)=\frac{3}{1-2j \alpha}$, and $d=d(n)=np=n^{\alpha+o(1)}$. Then, a.a.s.\
$$
\xi(\G(n,p)) > K := \left( \frac{d}{30c(2j+1)} \right)^{j+1}\,.
$$
\end{lemma}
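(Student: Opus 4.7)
The plan is to adapt the lower-bound argument for the classical cop number from~\cite{lp2} to the containment setting. Since $\tfrac{1}{2j+1} < \alpha < \tfrac{1}{2j}$, the integer $\ell$ in Lemma~\ref{lem:elem2} equals $2j$, and moreover $\alpha < 1/(j+1)$, so $d^{j+1} < n^{1-\varepsilon}$ for some fixed $\varepsilon > 0$. We may therefore restrict attention to a graph $G$ in which the conclusions of Lemmas~\ref{lem:elem1} and~\ref{lem:elem2} both hold: in particular, $P_i(v,w) \le c = \tfrac{3}{1-2j\alpha}$ for all pairs $v,w$ and all $2 \le i \le 2j$, and $|N_{j+1}(v)| = (1+o(1)) d^{j+1}$ for every vertex $v$.

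Against any placement of at most $K$ cops on edges I would describe a robber strategy of ``look-ahead $j{+}1$ rounds'' type. While located at $v$, the robber chooses a target vertex $w \in N_{j+1}(v)$ that is \emph{safe}, meaning that some edge incident to $w$ has edge-distance strictly greater than $j+1$ from every currently occupied cop-edge; she then walks toward $w$ along a shortest $v$--$w$ path, one step per round. Because the cops have only $j{+}1$ further moves available before her arrival, the designated safe incident edge at $w$ remains uncovered when she gets there, and she repeats the procedure with a new target chosen from $N_{j+1}(w)$. A small auxiliary argument (for instance, enlarging the look-ahead radius by one, or checking that intermediate vertices inherit a weaker form of safety from $w$) is needed to verify that each edge traversed by the robber is itself uncovered at the moment of her move.

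The heart of the proof is a combinatorial estimate showing that a safe target $w$ exists in every round. Call $w \in N_{j+1}(v)$ \emph{unsafe} if every edge incident to $w$ is threatened by some cop, where an edge $e'$ is threatened by a cop on edge $e$ when $d_{\mathrm{edge}}(e,e') \le j+1$. For a fixed cop-edge $e=xy$ and a fixed incident edge $e'$ at $w$, the pair $(w,e')$ corresponds to a walk from $v$ through $w$ to $\{x,y\}$ of total length at most $2j+2$; splitting this walk through the midpoint $w$ gives two pieces each of length at most $j+1 \le 2j$, to which the bound $P_i(\cdot,\cdot) \le c$ applies. Summing over the $K$ cops, over the $O(d)$ possible incident edges of $w$, and over the constantly-many length splittings and choices of endpoint in $\{x,y\}$, and comparing with the total of approximately $d \cdot d^{j+1}$ incidence pairs available from $N_{j+1}(v)$, the value $K = (d/(30c(2j+1)))^{j+1}$ is calibrated precisely so that the unsafe vertices form a strict minority of $N_{j+1}(v)$, and a safe target survives.

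The main obstacle I anticipate is the combinatorial bookkeeping: the exponent $j+1$ in $K$ must emerge from a multiplicative argument in which every additional step of cop reach contributes roughly a factor of $d$, and this only balances the $d^{j+1}$ expansion of $N_{j+1}(v)$ if constants are tracked carefully and the generous slack factor $(30(2j+1))^{j+1}$ is used to absorb the $O(j)$ choices of length and the $O(1)$ overcounting from endpoints. A secondary technical point is that the bound $P_i(v,w) \le c$ is only available for $i \le 2j$, so every walk of length $2j+1$ or $2j+2$ arising in the unsafe-count must first be broken at a midpoint of depth at most $j+1$ before the bound is invoked. Finally, the robber's initial position must itself be safe, which is handled by choosing it greedily after the cops are placed, relying on Lemma~\ref{lem:elem1}(i) to guarantee that most vertices are initially safe.
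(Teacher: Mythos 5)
There is a genuine gap, and it sits in the step you yourself identify as the heart of the proof. Your count of unsafe targets charges each cop only $O(jc^2)$ threatened pairs $(w,e')$, on the grounds that the walk from $v$ through $w$ to the cop's edge splits into two paths to which the bound $P_i(\cdot,\cdot)\le c$ applies. But that bound controls the number of paths between two \emph{fixed} endpoints, whereas what you must count is the number of pairs (path of length $j+1$ from $v$ to \emph{some} $w$, path of length at most $j+2$ from that $w$ to the cop), i.e.\ the number of walks of length up to $2j+3$ from $v$ to a fixed vertex with a marked midpoint; this is not $O(c^2)$ when the cop is close to $v$. Concretely, a single cop on an edge $vu$ incident to the robber's current vertex can, in $j+1$ edge-moves, reach every edge $u_{j+1}u_{j+2}$ lying at the bottom of a path $u=u_1,u_2,\dots,u_{j+2}$, which is $(1+o(1))d^{j+1}$ of your $(w,e')$ pairs. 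Hence $d\ll K$ cops placed on the $d$ edges at $v$ already make \emph{every} $w\in N_{j+1}(v)$ unsafe in your sense, so the majority argument collapses; and nothing in your phase-based strategy prevents the cops from accumulating near the robber between phases (they move $j+1$ times per phase toward a destination they can predict). This also shows your notion of safety is not the right invariant: those $d$ cops certainly do not win, yet they leave no safe target.

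The paper supplies exactly the missing control. It maintains, as an invariant throughout the game, an upper bound of $\left(d/(30c(2j+1))\right)^{i+1}$ on the number of cops at edge-distance $2i$ and $2i+1$ from the robber's current vertex (the ``safe vertex'' condition, stated in the graph with a ``deadly neighbour'' $x$ deleted to discount cops sitting right behind her), and the robber moves a \emph{single} step per round to a neighbour that is not $r$-dangerous for any $r\le 2j$. The path-counting lemma is then used only to show that each individual cop at a given distance endangers $O(c)$ neighbours of $v$, and the invariant caps how many cops there are at each such distance, which is precisely what your argument lacks. A secondary problem with the look-ahead scheme is specific to Containment: during the $j+1$ rounds of the walk the cops may occupy edges of the chosen path (which the robber cannot traverse) or occupy all edges at an intermediate vertex; in the one-step scheme this is handled by the bound on $\mathrm{C}^x_1(v)$ and the notion of $0$-dangerous neighbours, and it is not a small auxiliary matter in yours. (A further minor point: for $j=1$ your split produces a piece of length $j+2=3>2j$, outside the range where Lemma~\ref{lem:elem2}(i) applies.)
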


\begin{proof}
Since our aim is to prove that the desired bound holds a.a.s.\ for $\G(n,p)$, we may assume, without loss of generality, that a graph $G$ the players play on satisfies the properties stated in Lemmas~\ref{lem:elem1} and~\ref{lem:elem2}. Suppose that the robber is chased by $K$ cops. Our goal is to provide a winning strategy for the robber on $G$.  For vertices $x_1,x_2,\dots,x_s$, let $\pl^{x_1,x_2,\dots,x_s}_i(v)$ denote the number of cops in $E_i(v)$ (that is, at distance $i$ from $v$) in the graph $G \setminus \{x_1,x_2, \dots,x_s\}$.

Right before the robber makes her move, we say that the vertex $v$ occupied by the robber is \emph{safe}, if for some neighbour $x$ of $v$ we have $\pl^x_1(v) \le\frac{d}{30c(2j+1)}$, and
$$
\pl_{2i}^x(v), \pl_{2i+1}^x(v)\le \left( \frac{d}{30c(2j+1)} \right)^{i+1}
$$
for $i=1,2,\dots,j-1$ (such a vertex $x$ will be called a \emph{deadly neighbour} of $v$). The reason for introducing deadly neighbours is to deal with a situation that many cops apply a greedy strategy and always decrease the distance between them and the robber. As a result, there might be many cops ``right behind'' the robber but they are not so dangerous unless she makes a step ``backwards'' by moving to a vertex she came from in the previous round, a deadly neighbour! Moreover, note that a vertex is called safe for a reason: if the robber occupies a safe vertex, then the game is definitely not over since the condition for $C_1^x(v)$ guarantees that at most a small fraction of incident edges are occupied by cops.

Since a.a.s.\ $G$ is connected (see Lemma~\ref{lem:elem1}(ii)), without loss of generality we may assume that at the beginning of the game all cops begin at the same edge, $e$. Subsequently, the robber may choose a vertex $v$ so that $e$ is at distance $2j+2$ from $v$ (see Lemma~\ref{lem:elem1}(i) applied with $r=2j+1$ to see that almost all vertices are at distance $2j+1$ from both endpoints of $e$). Hence, even if all cops will move from $e$ to $E_{2j+1}(v)$ after this move, $v$ will remain safe as no bound is required for $\pl_{2j+1}^x(v)$. (Of course, again, without loss of generality we may assume that all cops pass for the next round and stay at $e$ before starting applying their best strategy against the robber.) Hence, in order to prove the lemma, it is enough to show that if the robber's current vertex $v$ is safe, then she can move along an unoccupied edge to a neighbour $y$ so that no matter how the cops move in the next round, $y$ remains safe.

For $0 \le r \le 2j$, we say that a neighbour $y$ of $v$ is {\em $r$-dangerous} if
\begin{itemize}
\item [(i)] an edge $vy$ is occupied by a cop (for $r=0$)\,, or
\item [(ii)] $\pl^{v,x}_r(y) \ge \frac 13 \left( \frac{d}{30c(2j+1)} \right)^{i}$ (for $r=2i$ or $r=2i-1$, where $i = 1,2, \ldots, j$)\,,
\end{itemize}
where $x$ is a deadly neighbour of $v$. We will check that for every $r \in \{0, 1, \ldots, 2j\}$, the number of $r$-dangerous neighbours of $v$, which we denote by $\dang(r)$, is smaller than $\frac {d}{2(2j+1)}$. Clearly, since $v$ is safe, 
$$
\dang(0) \le \pl^x_1(v) \le \frac{d}{30c(2j+1)} \le \frac {d}{2(2j+1)}.
$$
Suppose then that $r=2i$ or $r=2i-1$ for some $i \in \{1,2,\ldots, j\}$. Every $r$-dangerous neighbour of $v$ has at least $\frac 13 \left( \frac{d}{30c(2j+1)} \right)^i$ cops occupying $E_{\le (r+1)}(v)$. On the other hand, every edge from $E_{\le (r+1)}(v)$ is incident to at most $2$ vertices at distance at most $r$ from $v$. Moreover, Lemma~\ref{lem:elem2}(i) implies that there are at most $c$ paths between $v$ and any $w \in N_{\le r}(v)$. Finally, by the assumption that $v$ is safe, we have $\pl^{x}_{2i}(v), \pl^{x}_{2i+1}(v) \le  \left( \frac{d}{30c(2j+1)} \right)^{i+1}$, provided that $i \le j-1$; the corresponding conditions for $\pl_{2j}^x(v)$ and $\pl_{2j+1}^x(v)$ are trivially true, since both can be bounded from above by $K$, the total number of cops. Combining all of these yields
\begin{eqnarray*}
\frac 13 \left( \frac{d}{30c(2j+1)} \right)^i \cdot \dang(r) &\le& 2c \cdot \pl_{\le (r+1)}^x(v) \le 2c \cdot (2+o(1)) \pl_{r+1}^x(v) \\
&\le& 5 c \cdot \left( \frac{d}{30c(2j+1)} \right)^{i+1},
\end{eqnarray*}
and consequently $\dang(r) \le \frac {d}{2(2j+1)}$, as required. Thus, there at most $d/2$ of neighbours of $v$ are $r$-dangerous for some $r \in \{0,1,\dots,2j\}$. 

Since we have $(1+o(1)) d$ neighbours to choose from (see Lemma~\ref{lem:elem1}(i)), there are plenty of neighbours of $v$ which are not $r$-dangerous for any $r=0,1,\dots,2j$ and the robber might want to move to one of them. However, there is one small issue we have to deal with. In the definition of being dangerous, we consider the graph $G \setminus \{v,x\}$ whereas in the definition of being safe we want to use $G \setminus \{v\}$ instead. Fortunately, Lemma~\ref{lem:elem2}(iv) implies that we can find a neighbour $y$ of $v$ that is not only not dangerous but also $x$ does not belong to the $2j$-neighbourhood of $y$ in $G\setminus \{v\}$. It follows that $vy$ is not occupied by a cop and $\pl^{v}_r(y) < \frac 13 \left( \frac{d}{30c(2j+1)} \right)^{i}$ for $r=2i$ or $r=2i-1$, where $i = 1,2, \ldots, j$. We move the robber to $y$.

Now, it is time for the cops to make their move. Because of our choice of the vertex $y$, we can assure that the desired upper bound for $\pl_r^v(y)$ required for $y$ to be safe will hold for $r \in \{1,2,\dots,2j-1\}$. Indeed, the best that the cops can do to try to fail the condition for $\pl_r^v(y)$ is to move all cops at distance $r-1$ and $r+1$ from $y$ to $r$-neighbourhood of $y$, and to make cops at distance $r$ stay put, but this would not be enough. Thus, regardless of the strategy used by the cops, $y$ is safe and the proof is finished.
\end{proof}

\begin{lemma}\label{lemma:lowerbound2}
Let $\frac{1}{2j}<\alpha<\frac{1}{2j-1}$ for some integer $j\ge 1$,  $\bc=\bc(\alpha)=\frac{6}{1-(2j-1) \alpha}$ and $d=d(n)=np=n^{\alpha+o(1)}$. Then, a.a.s.\
$$
\xi(\G(n,p))\ge \bar{K} := \left( \frac{d}{30 \bc (2j+1)} \right)^{j+1} \frac{n}{d^{2j}}\,.
$$
\end{lemma}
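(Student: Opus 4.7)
The plan is to follow the strategy of Lemma~\ref{lemma:lowerbound1} and modify only those steps affected by the change of regime. As before, I condition on the a.a.s.\ conclusions of Lemmas~\ref{lem:elem1} and~\ref{lem:elem2} holding for the host graph $G$. In the present range the largest integer $\ell$ with $\ell<1/\alpha$ is $\ell=2j-1$, so the critical distance from the robber is $\ell+1=2j$. At layers up to $2j-1$ Lemma~\ref{lem:elem2}(i) still supplies the uniform path-multiplicity bound $P_r(v,w)\le \bc/2$, but at the critical distance $2j$ this bound degrades to $\bc\cdot d^{2j}/n$ by Lemma~\ref{lem:elem2}(ii) (applicable since $d^{2j}=n^{2j\alpha+o(1)}\gg n\log n$).

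Suppose the robber faces $\bar K$ cops. I call $v$ \emph{safe via a deadly neighbour} $x$ with exactly the same conditions as in Lemma~\ref{lemma:lowerbound1}: $\pl^x_1(v)\le T_0$ and $\pl^x_{2i}(v),\pl^x_{2i+1}(v)\le T_i$ for $i=1,\dots,j-1$, where $T_i:=(d/(30\bc(2j+1)))^{i+1}$. The starting-vertex argument (all cops start on a single edge and the robber picks $v$ at distance $2j+2$ from that edge) carries over verbatim because the safe conditions only reach down to level $2j-1$. The notion of an \emph{$r$-dangerous} neighbour $y$ of $v$ for $r\in\{0,1,\dots,2j\}$ is taken from Lemma~\ref{lemma:lowerbound1} with $c$ replaced by $\bc$. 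The main claim is once again that $\dang(r)\le d/(2(2j+1))$ for every admissible $r$; granting this, the robber picks a non-dangerous neighbour $y$ of $v$ whose $(2j-1)$-neighbourhood in $G\setminus\{v\}$ avoids $x$ (possible by Lemma~\ref{lem:elem2}(iv)), and $v$ then plays the role of a new deadly neighbour of $y$, with the safety of $y$ after the cops' reply following the same perturbation argument used in Lemma~\ref{lemma:lowerbound1}.

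For $r\in\{0,1,\dots,2j-2\}$ the estimate on $\dang(r)$ is copied word-for-word from Lemma~\ref{lemma:lowerbound1} with $c$ replaced by $\bc/2$. The two genuinely new cases are $r=2j-1$ and $r=2j$, where $\pl^x_{r+1}(v)$ is not controlled by any $T_i$ and only the trivial bound $\pl^x_{r+1}(v)\le\bar K$ is available. A short geometric-series check shows that $\pl^x_{\le 2j+1}(v)\le (2+o(1))\bar K$: indeed $\bar K/T_{j-1}=n/(30\bc(2j+1)\,d^{2j-1})\to\infty$ in our range, so the outermost two shells dominate. For $r=2j-1$ all contributing edges lie in $E^x_{\le 2j}(v)$ and the path-multiplicity bound is $\bc/2$, giving
$$
\dang(2j-1)\cdot\tfrac{1}{3}T_{j-1}\;\le\;2\cdot\tfrac{\bc}{2}\cdot 2\bar K.
$$
Using $\bar K=T_j\cdot n/d^{2j}$ and $T_j/T_{j-1}=d/(30\bc(2j+1))$, this reduces to $\dang(2j-1)\le(d/(5(2j+1)))\cdot(n/d^{2j})<d/(2(2j+1))$, since $n/d^{2j}<1$. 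For $r=2j$ the contributing paths may have length exactly $2j$, forcing the degraded multiplicity $\bc\cdot d^{2j}/n$ from Lemma~\ref{lem:elem2}(ii):
$$
\dang(2j)\cdot\tfrac{1}{3}T_{j-1}\;\le\;2\cdot\bc\tfrac{d^{2j}}{n}\cdot 2\bar K.
$$
Here the factor $d^{2j}/n$ cancels exactly against the $n/d^{2j}$ built into $\bar K$, leaving $\dang(2j)\le 2d/(5(2j+1))<d/(2(2j+1))$, as required.

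Summing the $2j+1$ dangerous counts, fewer than $d/2$ of the $(1+o(1))d$ neighbours of $v$ are dangerous, so the robber's move and the verification of $y$'s safety after the cops' reply are completed exactly as in Lemma~\ref{lemma:lowerbound1}. The main (and really the only) obstacle is the cancellation at level $r=2j$: the factor $n/d^{2j}$ in the definition of $\bar K$ has been put there precisely to absorb the degradation of path multiplicity from $\bc/2$ to $\bc\cdot d^{2j}/n$ at the critical distance $\ell+1=2j$, and once this single bookkeeping step is in place the recursion for safety closes with the same constants as in Lemma~\ref{lemma:lowerbound1}.
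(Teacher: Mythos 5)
Your proposal is correct and follows essentially the same route as the paper: keep the safety/deadly-neighbour machinery of Lemma~\ref{lemma:lowerbound1} verbatim, bound $\dang(r)$ for $r\le 2j-2$ as before, and handle the two outer levels by using the trivial bound $\pl^x_{\le 2j+1}(v)\le (2+o(1))\bar K$ together with the degraded path-multiplicity $\bc\,d^{2j}/n$ from Lemma~\ref{lem:elem2}(ii) at distance $2j$, where the factor $n/d^{2j}$ in $\bar K$ cancels it. Your constants and the final inequalities $\dang(2j-1),\dang(2j)\le d/(2(2j+1))$ check out against the paper's computation.
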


\begin{proof}
The proof is very similar to that of Lemma~\ref{lemma:lowerbound1}. The only difference is that checking the desired bounds for $\dang(2j-1)$ and $\dang(2j)$ is slightly more complicated. As before, we do not control the number of cops in $E_{2j}(v)$ and $E_{2j+1}(v)$, clearly $\pl^x_{2j}(v)$ and $\pl^x_{2j+1}(v)$ are bounded from above by $\bar{K}$, the total number of cops. We get
\begin{eqnarray*}
\frac 13 \left( \frac{d}{30\bc(2j+1)} \right)^j \cdot \dang(2j-1) &\le& 2\bc \cdot \pl_{\le (2j)}^x(v) \le 2\bc \cdot (2+o(1)) \bar{K} \\
&\le& 5\bc \cdot \left( \frac{d}{30\bc(2j+1)} \right)^{j+1},
\end{eqnarray*}
and consequently $\dang(2j-1) \le \frac {d}{2(2j+1)}$, as required. (Note that we have room to spare here but we cannot take advantage of it so we do not modify the definition of being $(2j-1)$-dangerous.)

Let us now notice that a cop at distance $2j+1$ from $v$ can contribute to the ``dangerousness'' of more than $\bc$ neighbours of $v$. However, the number of paths of length $2j$ joining $v$ and $w$ is bounded from above by $\bc d^{2j}/n$ (see Lemma~\ref{lem:elem2}(ii) and note that $d^{2j} = n^{2j \alpha + o(1)} \ge 7 n \log n$, since $2j \alpha > 1$). Hence,
\begin{eqnarray*}
\frac 13 \left( \frac{d}{30\bc(2j+1)} \right)^j \cdot \dang(2j) &\le& \frac {2 \bc d^{2j}}{n} \cdot \pl_{\le (2j+1)}^x(v) \le \frac {2 \bc d^{2j}}{n} \cdot (2+o(1)) \bar{K} \\
&\le& 5 \bc \cdot \left( \frac{d}{30\bc(2j+1)} \right)^{j+1},
\end{eqnarray*}
and, as desired, $\dang(2j)\le \frac{d}{2(2j+1)}$. Besides this modification the argument remains basically the same.
\end{proof}

\end{document}